\title{Crossing-critical graphs with large maximum degree}
\author{Zden\v{e}k Dvo\v{r}\'ak\thanks{Supported in part through a postdoctoral
   position at Simon Fraser University.}~\thanks{On leave from: Institute of Theoretical Informatics,
   Charles University, Prague, Czech Republic.}\\
  {Department of Mathematics}\\
  {Simon Fraser University}\\
  {Burnaby, B.C. V5A 1S6} \\
  email: {\tt rakdver@kam.mff.cuni.cz}
\and
  Bojan Mohar\thanks{Supported in part by the
  Research Grant P1--0297 of ARRS (Slovenia), by an NSERC Discovery Grant (Canada)
  and by the Canada Research Chair program.}~\thanks{On leave from:
  IMFM \& FMF, Department of Mathematics, University of Ljubljana, Ljubljana,
  Slovenia.}\\
  {Department of Mathematics}\\
  {Simon Fraser University}\\
  {Burnaby, B.C. V5A 1S6} \\
  email: {\tt mohar@sfu.ca}
}
\newtheorem{theorem}{Theorem}
\newtheorem{lemma}[theorem]{Lemma}
\newtheorem{conjecture}[theorem]{Conjecture}
\def\crn{\text{\rm cr}}
\def\crit{\text{\rm crit}}
\newcommand{\DEF}[1]{{\em #1\/}}
\begin{document}

\maketitle

%%%%%%%%%%%%%%%%%%%%%%%%%%%%%%%%%%%%%%%%%%
\begin{abstract}
A conjecture of Richter and Salazar about graphs that are critical for a fixed 
crossing number $k$ is that they have bounded bandwidth. 
A weaker well-known conjecture of Richter is that 
their maximum degree is bounded in terms of $k$. In this note we disprove these
conjectures for every $k\ge 171$, by providing examples of $k$-crossing-critical 
graphs with arbitrarily large maximum degree.
\end{abstract}
%%%%%%%%%%%%%%%%%%%%%%%%%%%%%%%%%%%%%%%%%%

%\section{Introduction}

A graph is \DEF{$k$-crossing-critical} (or simply \DEF{$k$-critical})
if its crossing number is at least $k$,
but every proper subgraph has crossing number smaller than $k$. 
Using the Excluded Grid Theorem of Robertson and Seymour \cite{RSey}, it is not 
hard to argue that $k$-crossing-critical graphs have bounded tree-width~\cite{GRS}. 
However, all known constructions of crossing-critical graphs 
suggested that their structure is ``path-like''. 
Salazar and Thomas conjectured (cf.~\cite{GRS}) that they have bounded path-width.
This problem was solved by Hlin\v{e}n\'y \cite{Hl}, who proved that the path-width 
of $k$-critical graphs is 
bounded above by $2^{f(k)}$, where $f(k) = (432\log_2 k + 1488) k^3 + 1$.

In the late 1990's, two other conjectures were proposed and made public
in 2003 at the Bled'03 conference \cite{Bled}
(see also \cite{RS} and \cite{MPRTT}).

\begin{conjecture}[Richter \cite{Bled}]
\label{conj:1}
For every positive integer $k$, there exists an integer $D(k)$ 
such that every $k$-crossing-critical graph has maximum degree 
less than $D(k)$.
\end{conjecture}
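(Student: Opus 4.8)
The abstract announces that Conjecture~\ref{conj:1} is actually \emph{false} for every $k\ge 171$, so what is needed is not a proof but a refutation: fix a suitable integer $k$ and produce an infinite family $G_1,G_2,\dots$ of $k$-crossing-critical graphs with $\Delta(G_n)\to\infty$. There is no soft obstruction to this — a graph of bounded treewidth may have unbounded maximum degree — so the task is genuinely constructive. The natural plan, following every known crossing-critical family, is to design a small \emph{gadget} $H$ and glue $n$ copies of it cyclically; the new feature required here is to single out a \emph{hub} vertex of $H$ and \emph{identify} the hubs of all copies into one vertex $v$, so that $\deg(v)=\Theta(n)$, while tuning $H$ so that $\crn(G_n)$ stays pinned at $k$ independently of $n$.

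Three things then have to be established. For the \textbf{upper bound} $\crn(G_n)\le k$ one exhibits an explicit drawing: each copy of $H$ is drawn cheaply inside a thin wedge issuing from $v$, the wedges are placed in cyclic order around $v$, and the design guarantees that all crossings occur locally — inside boundedly many wedges or between consecutive wedges — so that their number is a constant. For \textbf{criticality} one must show $\crn(G_n-e)<k$ for every edge $e$; since $G_n$ carries a cyclic symmetry permuting the copies, it suffices to treat one edge per orbit, deleting $e$, using the freed-up room, and redrawing with one crossing fewer. The crucial point is that the $\Theta(n)$ spoke edges at $v$ must each be \emph{essential} to forcing the $k$ crossings — which is exactly what simultaneously makes $\Delta(v)$ large and keeps the graph critical. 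The \textbf{lower bound} $\crn(G_n)\ge k$ is the heart of the argument and, I expect, by far the hardest step, since crossing-number lower bounds are notoriously brittle. The plan: start from an optimal drawing $D$; by averaging over the $n$ copies, all but boundedly many are drawn almost planarly and in a standard position relative to $v$; the rotation of $D$ at $v$ induces a cyclic order on the copies, and any copy sitting in the ``wrong'' cyclic slot, or drawn non-standardly, pays crossings; finally one checks that the global topology — how the copies must link through the hub and through whatever structure closes the family into a cycle — forces at least $k$ such crossings, with a residual that does not decay as $n\to\infty$.

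The main obstacle is the tension between these three requirements: the hub region must be rigid enough that the $k$ forced crossings survive in \emph{every} drawing (lower bound) and that \emph{every} edge, including all $\Theta(n)$ spokes at $v$, is indispensable (criticality), yet loose enough that adding further copies never pushes $\crn$ past $k$ (upper bound). Finding a gadget $H$ that threads this needle is the real content of the construction, and I would expect the numerical threshold ``$k\ge 171$'' to be the fixed crossing cost of the smallest indispensable core of such a gadget — presumably a sum of several independent penalty contributions. A secondary difficulty is making the delete-and-redraw step for criticality uniform over all edge orbits, especially the spokes at the unbounded-degree vertex $v$, where deleting an edge changes the rotation at $v$ and hence the whole wedge arrangement.
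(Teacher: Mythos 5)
Your reading of the statement is correct: Conjecture~\ref{conj:1} is not proved but refuted, and the refutation must be constructive. However, your proposal remains a plan, and the plan as stated runs head-on into the two difficulties you yourself name --- a tight lower bound on the crossing number of an explicit graph with a huge-degree vertex, and criticality of \emph{every} edge, including the $\Theta(d)$ spokes. The paper defuses both with devices your sketch lacks. First, it works with \emph{special graphs} $(G,T)$ whose thick edges may never be crossed: the construction of Lemma~\ref{lemma-splarge} is a fan of $d+1$ thick cycles through a hub $v$, auxiliary thick cycles $K_i$, spokes $vs^i$, and a fixed bundle of $22$ matching edges joining the two extreme cycles. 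Since the thick cycles are induced and nonseparating they must bound faces in any admissible drawing, so the lower bound $\crn(G,T)\ge 171$ reduces to a short clockwise/anti-clockwise case analysis --- not an averaging argument over $n$ copies --- and the threshold $171={19\choose 2}$ is simply the number of pairwise crossings among the $19$ edges $a_ia_i'$, not a sum of gadget penalties. The thick edges are then removed by a standard replacement (Lemma~\ref{lemma-elimthick}) that preserves the crossing number and the set of critical edges, so the lower bound never has to be re-proved for the simple graph.

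Second, and this is the step your outline is missing most, the paper never verifies that any explicitly constructed graph is $171$-crossing-critical. It only shows that the $d$ spokes $vs^k$ are $171$-critical in $(G,T)$ (deleting $vs^k$ lets the fan flip orientation at position $k$, saving exactly one crossing), and then invokes Lemma~\ref{lemma-elimextra}: any graph with crossing number at least $k$ contains a $k$-crossing-critical subgraph containing \emph{all} its $k$-critical edges. The final critical graph $H$ is thus extracted nonconstructively and automatically contains the $d$ edges at $v$; the ``uniform delete-and-redraw over all edge orbits'' problem you flag as a secondary difficulty never has to be solved at all. Likewise, no delicate tuning keeps the crossing number pinned as $d$ grows --- the crossings come from a fixed bundle independent of $d$, and the case $k>171$ is handled by just adding $k-171$ disjoint copies of $K_5$. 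Without these two reductions, the hardest parts of your plan (the lower bound for the simple graph and full edge-criticality) are left as aspirations, so the proposal does not yet amount to a refutation.
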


The second conjecture was proposed as an open problem in the 1990's by
Carsten Thomassen and formulated as a conjecture by Richter and Salazar.

\begin{conjecture}[Richter and Salazar \cite{Bled,RS}]
\label{conj:2}
For every positive integer $k$, there exists an integer $B(k)$ 
such that every $k$-crossing-critical graph has bandwidth at most $B(k)$.
\end{conjecture}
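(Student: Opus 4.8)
\medskip
\noindent\textbf{Proof proposal.}
Both conjectures are in fact false, and I would disprove them by an explicit construction. Since the bandwidth of a graph is always at least half its maximum degree, Conjecture~\ref{conj:2} follows from Conjecture~\ref{conj:1}, so it suffices to refute Conjecture~\ref{conj:1}: fix one value of $k$ and exhibit an infinite sequence $G_1,G_2,\dots$ of $k$-crossing-critical graphs with $\Delta(G_n)\to\infty$. Each $G_n$ will be assembled from two parts --- a fixed, highly symmetric ``crossing engine'' whose presence forces $k$ crossings no matter how the graph is drawn, and a single high-degree ``hub'' vertex $v$ joined to $n$ interchangeable small ``rib'' gadgets. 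The ribs are to be chosen so that in every good drawing they lie in a planar region and cost nothing, while each rib edge at $v$ is nonetheless individually essential; thus $\deg_{G_n}(v)$ grows linearly in $n$ while $\crn(G_n)$ stays equal to $k$.

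For the construction I would work in the tile framework of Pinontoan and Richter --- equivalently, glue the pieces of $G_n$ along separators of bounded size and use additivity of the crossing number across such separators (Bokal's zip product being the convenient black box). Start from a cyclic concatenation of copies of a small tile $T$ that is already known to yield $k$-crossing-critical graphs, and replace one tile of the cycle by a modified tile carrying $v$, with $v$ adjacent to the midpoints of $n$ parallel length-two paths drawn inside a single face. The rigidity of the cyclic backbone, reinforced by the ribs, is what makes everything fit together.

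Next I would establish $\crn(G_n)=k$. The upper bound is by an explicit drawing: draw the backbone optimally with $k$ crossings and nest the $n$ ribs inside empty faces incident with $v$. The lower bound $\crn(G_n)\ge k$ is the heart of the matter: one argues that in any drawing, cutting along the separators produces drawings of the individual tiles whose crossing numbers must sum to at least $k$, with the ribs preventing the backbone from being ``short-circuited''. This is exactly where the numerical threshold enters: the engine tile $T$ must be strong enough that the accounting rules out every drawing with fewer than $k$ crossings, uniformly in $n$, and it is this requirement that pins the admissible $k$ down to $k\ge 171$.

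Finally, criticality. By symmetry there are only boundedly many orbits of edges --- the backbone edges, the edges incident with the hub $v$, and a few edge types internal to the engine --- so for each it is enough to delete one representative and produce a drawing with at most $k-1$ crossings: deleting a hub edge frees a face and lets one backbone crossing be rerouted, while for a backbone or engine edge one uses the criticality of the underlying tiled graph together with additivity. Assembling these facts gives, for every $k\ge 171$, infinitely many $k$-crossing-critical graphs of unbounded maximum degree, disproving both conjectures. I expect the uniform lower bound on $\crn(G_n)$ to be the principal obstacle: crossing-number lower bounds are delicate, and one must make sure the argument does not degrade as the number of ribs --- hence the number of places where a crossing could be ``hidden'' --- grows without bound.
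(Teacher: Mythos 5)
Your opening reduction is sound and matches the paper's logic: since bandwidth is at least half the maximum degree, any family of $k$-crossing-critical graphs with unbounded $\Delta$ refutes Conjecture~\ref{conj:2} along with Conjecture~\ref{conj:1}. But beyond that, what you have is a program rather than a proof, and the two load-bearing claims are exactly the ones left as declarations of intent. First, the uniform lower bound $\crn(G_n)\ge k$ for all $n$ is only asserted ("one argues that\dots"), and the zip-product/tile additivity you invoke as a black box applies under specific hypotheses (small cut vertices or separators with degree conditions) that your hub vertex $v$ of unbounded degree sits squarely against; you give no argument that the accounting survives attaching $n$ ribs at a single vertex. Second, and more seriously, the criticality of each hub edge is in direct tension with your own design requirement: you want the ribs to "lie in a planar region and cost nothing" in every good drawing, but then deleting a rib edge does not obviously save any crossing, so why would $\crn(G_n - vs) < k$? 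The paper's Lemma~\ref{lemma-splarge} resolves precisely this tension by a concrete mechanism: the $171=\binom{19}{2}$ crossings come from a fan of cycles $C_0,\dots,C_d$ through $v$, and each edge $vs^k$ individually blocks one specific cheaper drawing, namely the one in which the fan is split at position $k$ (cycles $C_0,\dots,C_{k-1}$ clockwise, $C_k,\dots,C_d$ anti-clockwise), which costs only $170$. Nothing in your sketch supplies an analogue of this "each hub edge forbids its own cheaper drawing" mechanism, and your suggestion that the threshold $171$ would emerge from tile strength is backwards --- it is a count internal to that mechanism, not a property of the backbone.

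You also take on an unnecessary burden that makes the remaining gaps harder to close: you try to make the entire graph $G_n$ $k$-critical, arguing by symmetry over edge orbits. The paper avoids this entirely. It first builds a special graph with thick (uncrossable) edges having many $171$-critical edges at one vertex (Lemma~\ref{lemma-splarge}), then eliminates the thick edges by dense gadgets without losing those critical edges (Lemma~\ref{lemma-elimthick}), and finally passes to a $k$-crossing-critical subgraph that is guaranteed to contain every $k$-critical edge (Lemma~\ref{lemma-elimextra}), hence still contains the high-degree vertex; larger $k$ is then handled by adding disjoint copies of $K_5$. Adopting that "find many critical edges at one vertex, then extract a critical subgraph" strategy would let you drop your orbit analysis and the rerouting claims for backbone and engine edges; but even so, you would still need to supply the missing argument that each individual hub edge is critical, which is the heart of the paper's proof and is absent from yours.
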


Conjecture \ref{conj:2} would be a strengthening of Hlin\v{e}n\'y's theorem
about bounded path-width and would also imply Conjecture \ref{conj:1}.

Hlin\v{e}n\'y and Salazar \cite{HS} recently made a step towards 
Conjecture \ref{conj:1} by proving that $k$-crossing-critical graphs cannot
contain a subdivision of $K_{2,N}$ with $N=30k^2+200k$.

In this note we give examples of $k$-crossing-critical graphs of arbitrarily large 
maximum degree, thus disproving both Conjectures \ref{conj:1} and~\ref{conj:2}.

\medskip

A \DEF{special graph} is a pair $(G,T)$, where $G$ is a graph and $T\subseteq E(G)$.
The edges in the set $T$ are called \DEF{thick edges} of the special graph.
A \DEF{drawing} of a special graph $(G,T)$ is a drawing of $G$ such that the edges 
in $T$ are not crossed. The crossing number $\crn(G,T)$ of a special graph is 
the minimum number of edge crossings in a drawing of $(G,T)$ in the plane.
(We set $\crn(G,T)=\infty$ if a thick edge is crossed in every drawing of $G$.)
An edge $e\in E(G)\setminus T$ is
\DEF{$k$-critical} if $\crn(G,T)\ge k$ and $\crn(G-e,T)<k$.  Let $\crit_k(G,T)$ be
the set of $k$-critical edges of $(G,T)$.  
If $T=\emptyset$, then we write just $\crn(G)$
for the crossing number of $G$ and $\crit_k(G)$ for the set of $k$-critical
edges of $G$. Note that the graph $G$ is $k$-critical if $\crit_k(G)=E(G)$.

A standard result (see, e.g., \cite{DMS}) is that we can eliminate the
thick edges by replacing them with sufficiently dense subgraphs. 
(In fact, one can replace every edge $xy$ by $t=\crn(G,T)+1$ parallel edges
or by $K_{2,t}$ if multiple edges are not desired.) 

\begin{lemma}\label{lemma-elimthick}
For every special graph $(G,T)$ with $\crn(G,T)<\infty$
and for any $k$, there exists a graph 
$\tilde G\supseteq G$ such that $\crn(G,T)=\crn(\tilde G)$ and\/ $\crit_k(G,T)\subseteq \crit_k(\tilde G)$.
\end{lemma}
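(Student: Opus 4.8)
The plan is to construct $\tilde G$ from $(G,T)$ by replacing each thick edge with a gadget that is ``uncrossable'' in any near-optimal drawing, so that drawings of $\tilde G$ correspond exactly to drawings of $(G,T)$. Let $t=\crn(G,T)+1$. For each thick edge $xy\in T$, I would delete $xy$ and add a copy of $K_{2,t}$ with the two vertices of degree $t$ identified with $x$ and $y$ (if multiple edges are allowed, $t$ parallel copies of $xy$ work and simplify the argument); call the resulting graph $\tilde G$. Clearly $\tilde G\supseteq G$ in the sense that $G$ is a subgraph of $\tilde G$ up to the subdivision/replacement of thick edges — more precisely I should phrase the statement so that $E(G)\setminus T\subseteq E(\tilde G)$ and each non-thick edge of $G$ survives; the containment $\crit_k(G,T)\subseteq\crit_k(\tilde G)$ is then about these surviving edges.

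The first inequality, $\crn(\tilde G)\le\crn(G,T)$, is easy: take an optimal drawing of $(G,T)$, which has $\crn(G,T)=t-1$ crossings and leaves every thick edge crossing-free; in a small neighborhood of each crossing-free arc representing a thick edge $xy$ we can draw the gadget $K_{2,t}$ (its $t$ internal vertices placed along the arc) without any new crossings. Hence $\crn(\tilde G)\le t-1=\crn(G,T)$. For the reverse inequality, take an optimal drawing $\mathcal D$ of $\tilde G$; it has at most $t-1$ crossings. I would argue that in $\mathcal D$, for each thick edge $xy$, at least one of the $t$ internal $x$–$y$ paths of the gadget is crossing-free: the $t$ internal vertices give $t$ internally disjoint $x$–$y$ paths of length two, and if every one of them carried a crossing we would already have $t>t-1$ crossings just from this single gadget (and different gadgets are edge-disjoint, so crossings are not double-counted). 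Contracting a crossing-free $x$–$y$ path back to a single uncrossed edge $xy$ for every thick edge then yields a drawing of $G$ in which all of $T$ is crossing-free and the number of crossings is unchanged, so $\crn(G,T)\le\crn(\tilde G)$. Combining the two bounds gives $\crn(G,T)=\crn(\tilde G)$.

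For the criticality statement, let $e\in\crit_k(G,T)$, so $e\notin T$, $\crn(G,T)\ge k$ and $\crn(G-e,T)<k$. Since $\crn(\tilde G)=\crn(G,T)\ge k$, it remains to show $\crn(\tilde G-e)<k$. But $\tilde G-e$ is exactly the graph obtained from the special graph $(G-e,T)$ by the same thick-edge elimination (note $e\notin T$, so the gadgets are untouched), and $\crn(G-e,T)<\infty$ since $\crn(G,T)<\infty$; applying the equality just proved to $(G-e,T)$ gives $\crn(\tilde G-e)=\crn(G-e,T)<k$. Hence $e\in\crit_k(\tilde G)$, which establishes $\crit_k(G,T)\subseteq\crit_k(\tilde G)$. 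One should check that $t$ is large enough uniformly: since $\crn(G-e,T)\le\crn(G,T)=t-1$, the same value of $t$ serves for all the subgraphs $G-e$ simultaneously, so the single graph $\tilde G$ works for every $k$-critical edge at once.

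The main obstacle is the ``at least one gadget path is crossing-free'' step, i.e., making sure the crossing budget really is spent within a single gadget and that the replacement does not create crossings that could be charged elsewhere. The key points are that the gadgets for distinct thick edges are pairwise edge-disjoint (so their crossings in $\mathcal D$ are counted without overlap) and that the $t$ two-edge paths inside one gadget are internally disjoint (so a drawing with a crossing on each would force $\ge t$ crossings in that gadget alone, contradicting $\le t-1$ total). A minor technical care is the case of crossings between two edges of the same gadget or between a gadget edge and a non-gadget edge — in either case such a crossing lies on at most one of the $t$ internal paths, so the counting still goes through. I would also remark that using $t$ parallel edges instead of $K_{2,t}$ makes this counting completely transparent, and that the $K_{2,t}$ version is only needed when one insists on simple graphs.
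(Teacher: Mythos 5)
Your construction is exactly the one the paper has in mind (it states the lemma as a standard result and only hints at replacing each thick edge $xy$ by $t=\crn(G,T)+1$ parallel edges or by $K_{2,t}$), and your easy direction, the contraction back to a drawing of $(G,T)$, and the criticality bookkeeping (applying the equality to $(G-e,T)$ with the same, uniformly large $t$) are all fine. The problem is the key step, ``at least one internal path of each gadget is crossing-free in an optimal drawing of $\tilde G$''. A single crossing can involve two edges of the \emph{same} gadget lying on two \emph{different} internal paths (e.g.\ $xu_i$ crossing $yu_j$ with $i\ne j$, or two bundle-mates in the parallel-edge version), and such a crossing hits two of the $t$ paths at once. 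Your closing remark that a crossing between two edges of the same gadget ``lies on at most one of the $t$ internal paths'' is false exactly in this case (each \emph{edge} lies on at most one path, but a crossing involves two edges). Hence, if all $t$ paths of a gadget are crossed, your count only yields at least $\lceil t/2\rceil$ crossings chargeable to that gadget, and with $t=\crn(G,T)+1$ this can be $\le\crn(G,T)$, so no contradiction follows; the step as written fails.

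The gap is real but short to repair, in either of two ways. (i) First eliminate intra-gadget crossings: view each path $xu_iy$ as a single $x$--$y$ curve (suppress the degree-two vertex), and whenever two such curves of one gadget cross, exchange their tails at the crossing point; this removes that crossing, creates no new crossings between distinct curves, and any self-intersections can be shortcut, so the total strictly decreases. Consequently a crossing-minimal drawing of $\tilde G$ may be assumed to have no crossing between two connections of the same gadget (for parallel edges this is just the standard fact that adjacent edges do not cross in an optimal drawing). Then each crossing meets at most one path of any fixed gadget, and your bound $\ge t>\crn(\tilde G)$ goes through. (ii) Alternatively, take $t=2\crn(G,T)+1$; then even the crude ``each crossing hits at most two paths of a gadget'' count gives $\lceil t/2\rceil>\crn(G,T)\ge\crn(\tilde G)$, and nothing else in your argument is affected by the larger $t$. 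Finally, to get the literal containment $\tilde G\supseteq G$ required by the statement, keep the thick edge $xy$ and add the gadget alongside it instead of deleting $xy$ (thick edges are not claimed to be critical in $\tilde G$, so this is harmless); then no weakening or rephrasing of the lemma is needed.
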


Furthermore, note the following:

\begin{lemma}\label{lemma-elimextra}
Let $k$ be an integer. Any graph $G$ with $\crn(G)\ge k$ contains 
a $k$-crossing-critical subgraph $H$ such that $\crit_k(G)\subseteq E(H)$.
\end{lemma}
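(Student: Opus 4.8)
The plan is to take $H$ to be an edge-minimal subgraph of $G$ among those that still have crossing number at least $k$ and still contain every $k$-critical edge of $G$, and then to show that minimality forces $H$ to be $k$-crossing-critical. Concretely, let $\mathcal{F}$ be the family of all subgraphs $H\subseteq G$ such that $\crn(H)\ge k$ and $\crit_k(G)\subseteq E(H)$. Since $G\in\mathcal{F}$, the family is non-empty, so we may choose $H\in\mathcal{F}$ with $|E(H)|$ minimum; after deleting isolated vertices (which changes neither the crossing number nor the edge set) we may also assume that $H$ has no isolated vertex. I claim this $H$ is $k$-crossing-critical, which together with $\crit_k(G)\subseteq E(H)$ proves the lemma.

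Since $H$ has no isolated vertex, every proper subgraph $H'$ of $H$ satisfies $E(H')\subsetneq E(H)$, and hence $H'\subseteq H-f$ for some $f\in E(H)$; as the crossing number is monotone under taking subgraphs, it suffices to prove that $\crn(H-f)<k$ for every $f\in E(H)$.

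Fix $f\in E(H)$ and distinguish two cases. If $f\in\crit_k(G)$, then $\crn(G-f)<k$ by definition of $\crit_k$, and since $H-f\subseteq G-f$ we obtain $\crn(H-f)\le\crn(G-f)<k$. If $f\notin\crit_k(G)$, then $\crit_k(G)\subseteq E(H)\setminus\{f\}=E(H-f)$; were $\crn(H-f)\ge k$ as well, then $H-f$ would belong to $\mathcal{F}$, contradicting the minimality of $|E(H)|$. In either case $\crn(H-f)<k$, so $H$ is $k$-crossing-critical.

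This argument is elementary, and I do not expect a real obstacle; the only point requiring care is the first case above, where one must make sure that no $k$-critical edge of $G$ is ever deleted along the way. This is precisely where the definition of $\crit_k(G)$ combines with subgraph-monotonicity: deleting such an edge $f$ from \emph{any} subgraph of $G$ keeps the crossing number at most $\crn(G-f)<k$, so such an $f$ is automatically ineligible for the edge-deletion step.
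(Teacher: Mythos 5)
Your proof is correct. It differs from the paper's in how the minimality is organized: the paper argues by minimal counterexample, deleting one non-$k$-critical edge $e$ at a time, and the crux of its argument is the observation that every $k$-critical edge $f$ of $G$ remains $k$-critical in $G-e$ (since $\crn((G-e)-f)\le\crn(G-f)<k$), so that $\crit_k(G)\subseteq\crit_k(G-e)$ and the induction hypothesis applies. You instead make a single extremal choice, minimizing $|E(H)|$ over all subgraphs $H$ with $\crn(H)\ge k$ and $\crit_k(G)\subseteq E(H)$, which builds the containment requirement into the optimization and removes any need to track how the set of critical edges evolves; the edges of $\crit_k(G)$ are then handled directly in your first case by subgraph-monotonicity of the crossing number ($\crn(H-f)\le\crn(G-f)<k$), and all other edges by minimality. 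Both arguments rest on the same two elementary facts, but yours is arguably the cleaner packaging, and you are also more careful about a point the paper glosses over, namely that ``every proper subgraph has crossing number $<k$'' reduces to edge-deleted subgraphs only after discarding isolated vertices.
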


\begin{proof}
For a contradiction, suppose that $G$ is a smallest counterexample.
If $G$ were $k$-critical, then we would set $H=G$, hence $G$ contains
a non-$k$-critical edge $e$.  It follows that $\crn(G-e)\ge k$.
Let $f$ be a $k$-critical edge in $G$, i.e., $\crn(G-f)<k$.
As $\crn((G-e)-f)\le\crn(G-f)<k$, $f$ is a $k$-critical edge in $G-e$.
Therefore, $\crit_k(G)\subseteq\crit_k(G-e)$.  Since $G$ is the smallest
counterexample, $G-e$ has a $k$-critical subgraph $H$ with
$\crit_k(G-e)\subseteq E(H)$.  However, $H\subseteq G$ and $\crit_k(G)\subseteq E(H)$,
which is a contradiction.
\end{proof}

Let us now proceed with the main result. Two paths $P_1$ and $P_2$ in
a special graph are \DEF{almost edge-disjoint} if all the edges in 
$E(P_1)\cap E(P_2)$ are thick.

\begin{figure}
\centering
\includegraphics[width=1.0\textwidth]{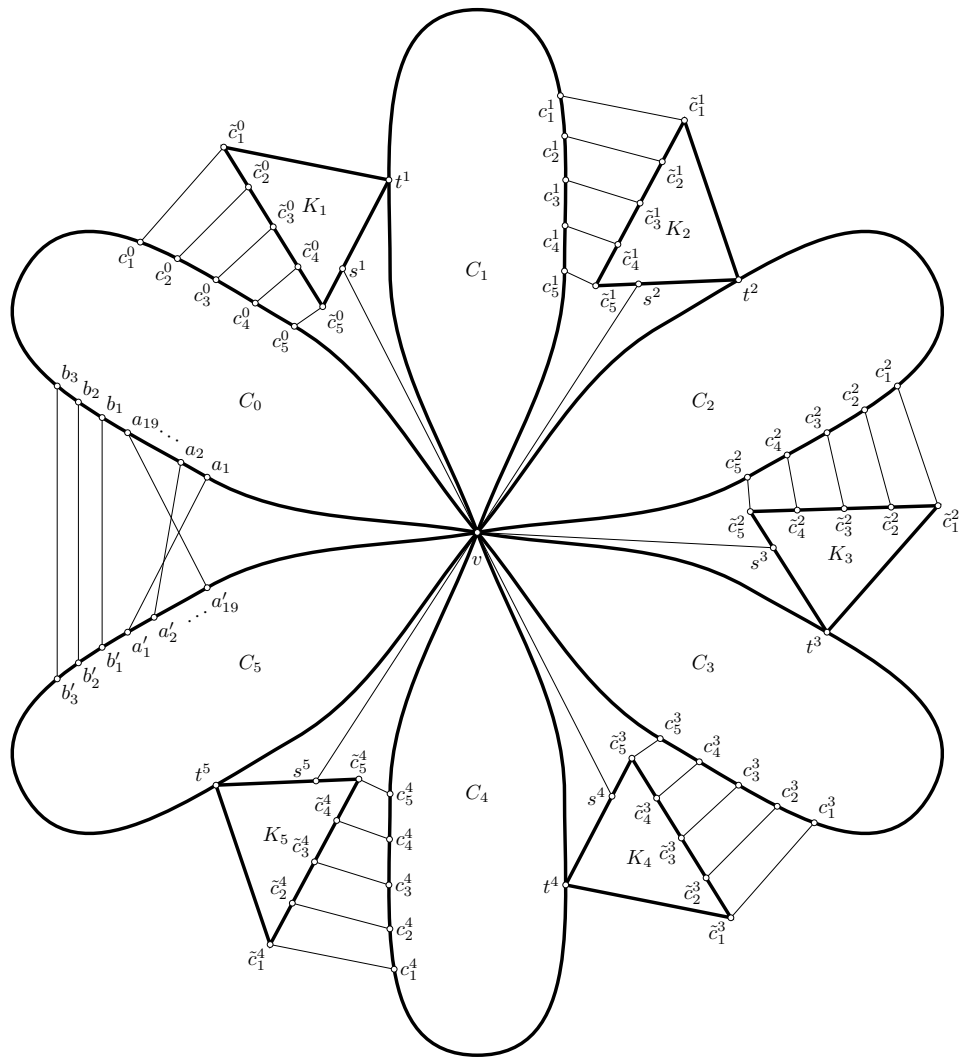}
\caption{A special graph with critical edges $vs^i$}
 \label{fig:1}
\end{figure}

\begin{lemma}\label{lemma-splarge}
For any $d$, there exists a special graph $(G,T)$ and a vertex
$v\in V(G)$ such that $\crit_{171}(G,T)$
contains at least $d$ edges incident with $v$.
\end{lemma}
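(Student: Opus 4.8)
\medskip
\noindent\textbf{Proof idea.}
The plan is to build $(G,T)$ as a ``wheel'' whose hub is $v$ and whose rim is a cyclic chain of identical rigid gadgets, so rigid that the rim ``wants'' to be drawn as a twisted band and therefore cannot peacefully enclose the hub. First I would fix a suitable \emph{tile}: a small special graph with a left and a right interface (paths of thick edges, both on the same number of vertices) and one distinguished vertex $s$, chosen --- in the spirit of known tile constructions of crossing-critical families --- so that, when $N$ copies $\tau_1,\dots,\tau_N$ are glued in a circle (identifying the right interface of $\tau_j$ with the left interface of $\tau_{j+1}$, and closing up with a fixed small amount of ``twist'' at the last seam), the resulting rim $R_N$ is almost rigid: in any drawing with few crossings, each tile looks essentially like its planar model, sitting in roughly its intended cyclic position, so the rim is confined to a thin neighbourhood of a M\"obius-type band. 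I then form $G$ by adding the hub $v$ together with an edge $vs^i$ to the distinguished vertex $s^i$ of the $i$th tile for $i=1,\dots,N$, and let $T$ be the set of interface (thick) edges; by Lemma~\ref{lemma-elimthick} these may be traded for ordinary dense subgraphs afterwards. Take $N\ge d$; it then remains to prove (a) $\crn(G,T)\ge 171$, and (b) $\crn(G-vs^i,T)<171$ for every $i$.

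Part (b) is the easy direction. Once the spoke $vs^i$ is deleted, the hub is no longer tied to the $i$th tile, so the rim can realize its preferred (twisted) shape with the hub hanging off to one side rather than enclosed; drawing it this way gives fewer than $171$ crossings, uniformly in $N$. Together with (a), this makes every $vs^i$ a $171$-critical edge, so $\crit_{171}(G,T)$ contains all $N\ge d$ edges $vs^1,\dots,vs^N$; and since the construction is invariant under cyclically rotating the tiles, it suffices to verify (b) for a single $i$.

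Part (a) is the heart of the proof and the main obstacle: one must show that the wheel $G$, with \emph{all} spokes present, genuinely needs at least $171$ crossings, and --- just as importantly --- that this bound does not erode as $N\to\infty$. I would take an optimal drawing $D$, assume it uses $c<171$ crossings, and argue for a contradiction. Since $c$ is small and each interface cycle is an uncrossed simple closed curve, the rigidity of the tiles should pin down an inflexible picture of $D$: the rim lies close to a M\"obius band $\Sigma$ and every spoke leaves $v$ to reach a neighbourhood of $\Sigma$. But the twist in $\Sigma$ makes the cyclic order of the attachment points $s^1,\dots,s^N$ around the rim incompatible with \emph{any} rotation of the corresponding spokes at $v$: reconciling them forces at least one tile to be ``reversed'', and the rigidity of the tile charges each reversal at least $171$ crossings, contradicting $c<171$. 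Making ``essentially like its planar model'', ``roughly its intended cyclic position'' and ``charges at least $171$'' precise, and designing the tile so that this count comes out exactly $171$, is where essentially all of the difficulty and bookkeeping lie; the specific value $171$ is simply whatever the chosen tile produces. (Sanity check against the Hlin\v{e}n\'y--Salazar theorem: although $\deg(v)=N$ is unbounded, the $s^i$ are spread all around the rim, so $G$ contains no subdivision of $K_{2,N'}$ for large $N'$.)
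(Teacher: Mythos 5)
There is a genuine gap: your text is a program for a proof rather than a proof, and it defers exactly the parts that carry all the content. You never exhibit the tile, never prove the ``rigidity'' that every low-crossing drawing of the rim looks like the planar/twisted model, and never do the count that makes the threshold come out at $171$ on one side and strictly below $171$ on the other. That accounting is delicate: in the paper's construction the gap between the critical and non-critical regimes is a single crossing ($\crn(G,T)=171$ versus $\crn(G-vs^k,T)=170$), so a statement like ``each reversal charges at least $171$ crossings'' cannot simply be postulated of a tile; one must design the gadget so that the forced configuration costs, say, $170$ crossings no matter what, and the one remaining spoke is forced to add at least one more. Your sketch never explains where such a tile comes from or why the lower bound does not erode as $N\to\infty$.

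Step (b) is also not sound as stated. You argue that after deleting the one spoke $vs^i$ ``the hub is no longer tied to the $i$th tile, so the rim can realize its preferred (twisted) shape with the hub hanging off to one side.'' But $v$ is still joined by spokes to all $N-1$ remaining tiles, which are spread around the whole rim, so $v$ cannot simply hang off to one side: you must show that all remaining spokes can be routed with few crossings, and that is precisely the nontrivial part. Compare the paper's construction, where the cycles attached at $v$ split in any cheap drawing into a clockwise block $C_0,\dots,C_{k-1}$ and an anti-clockwise block $C_k,\dots,C_d$ at some index $k$; a fixed set of $60+110=170$ crossings is forced by the matching $M$ and the paths $P_i$, and the spoke $vs^k$ at the transition index is the unique edge forced to cross once more. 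Deleting $vs^k$ lets the transition sit exactly at position $k$, and the paper writes down the explicit cyclic order of the edges at $v$ that realizes $170$ crossings with all other spokes intact. Your proposal contains no analogue of this verification, and without it neither $\crn(G,T)\ge 171$ nor $\crn(G-vs^i,T)<171$ is established, so the criticality of the edges $vs^i$ remains unproven.
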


\begin{proof}
Let $(G,T)$ be the special graph drawn as follows:  we start with $d+1$
thick cycles $C_0,C_1,\ldots,C_d$ intersecting in a vertex $v$,
i.e., $C_i\cap C_j = \{v\}$ for $0\le i < j \le d$. Their lengths are
$|C_0|=28$, $|C_d|=24$ and $|C_i|=7$ for $1\le i<d$.
They are drawn in the plane so that all their vertices are incident with the unbounded face and
their clockwise order around $v$ is $C_0,C_1,\ldots,C_d$.
See Figure \ref{fig:1} illustrating the case $d=5$.
Let $C_0=va_1a_2\ldots a_{19}b_1b_2b_3c^0_1c^0_2\ldots c^0_5$, $C_d=vt^db'_3b'_2b'_1a'_1a'_2\ldots a'_{19}$
and $C_i=vt^ic_1^ic_2^i\ldots c_5^i$ for $1\le i<d$.  Furthermore, 
add $d$ vertices $s^1$, \ldots, $s^d$ adjacent to $v$.
The clockwise cyclic order of the neighbors of $v$ is 
$a_1,c^0_5,s^1,t^1,c^1_5,s^2,t^2,c^2_5, \ldots, 
s^{d-1},t^{d-1},c^{d-1}_5, s^d,t^d, a'_{19}$.
For $1\le i\le d$, add thick cycles $K_i$ whose vertices in the clockwise order are $t^i$, $s^i$,
and five new vertices $\tilde{c}^{i-1}_5$, $\tilde{c}^{i-1}_4$, \ldots, $\tilde{c}^{i-1}_1$.  Finally, add the following
edges: $c^i_j\tilde{c}^i_j$ for $0\le i<d$ and $1\le j\le 5$, $a_ia'_i$ for $1\le i\le 19$
and $b_ib'_i$ for $1\le i\le 3$.  As described, $T=\bigcup_{i=0}^d E(C_i) \cup \bigcup_{i=1}^d E(K_i)$.
Let $M=\{a_1a'_1,a_2a'_2,\ldots,a_{19}a'_{19},b_1b'_1,b_2b'_2,b_3b'_3\}$.

This drawing ${\cal G}$ of $(G,T)$ has ${19\choose2}=171$ crossings, as the edges 
$a_ia'_i$ and $a_ja'_j$ intersect for each $1\le i<j\le 19$, and
there are no other crossings. Let us show that $\crn(G,T)=171$.  
Let ${\cal G}'$ be an arbitrary drawing of
$(G,T)$, and for a contradiction assume that it has less than $171$ crossings.
Let us first observe that every thick cycle $C_i$ and $K_j$ is an induced 
nonseparating cycle of $G$. Therefore it bounds a face of ${\cal G}'$.
Consider the cyclic clockwise order of the neighbors of $v$ according to the
drawing ${\cal G}'$. For each cycle $C_i$ ($0\le i\le d$), 
the two edges of $C_i$ incident with $v$ are consecutive in this order, 
since $C_i$ bounds a face. Without loss on generality, we assume that each 
cycle $C_i$ bounds a face distinct from the unbounded one.
If the cyclic order of the vertices around the face $C_i$ is
the same as in the drawing ${\cal G}$, we say
that $C_i$ is drawn {\em clockwise}, otherwise it is drawn {\em anti-clockwise}.
We may assume that $C_0$ is drawn clockwise.  If $C_d$ were drawn clockwise 
as well, then each pair of
edges $a_ia'_i$ and $a_ja'_j$ with $1\le i<j\le 19$ would intersect,
and the drawing ${\cal G}'$ would have
at least $171$ crossings. Therefore, $C_d$ is drawn anti-clockwise.
It follows that the edges $a_ia_i'$ and $b_jb_j'$ intersect for $1\le i\le 19$ 
and $1\le j\le 3$, and the edges $b_ib'_i$ and $b_jb'_j$ intersect for 
$1\le i<j\le 3$, giving $60$ crossings. For $1\le i\le 5$, let $P_i$ be the path
$c^0_i\tilde{c}^0_i\tilde{c}^0_{i-1}\ldots\tilde{c}^0_1t^1c^1_1c^1_2\ldots c^1_i\tilde{c}^1_i\ldots 
\tilde{c}^1_1t^2 \ldots t^d$.
These paths are mutually almost edge-disjoint and each of them intersects all edges
of $M$ in the drawing ${\cal G}'$, thus contributing at least $110$ crossings
all together. Therefore, the drawing ${\cal G}'$ has at least $170$ crossings.  
Since we assume that this drawing
has less than $171$ crossings, we conclude that there are no other crossings.

The cycle $va_1a'_1a'_2\ldots a'_{19}$ splits the plane into two regions $R_1$ and
$R_2$, such that $R_1$ contains the face bounded by $C_0$ and $R_2$
contains the face bounded by $C_d$.
For $j=1,2$, let $A_j$ be the set of cycles $C_i$ ($0\le i\le d$) such that
the face bounded by $C_i$ lies in the region $R_j$.
As $P_1$ intersects the edge $a_1a'_1$ only once, $A_1=\{C_0,C_1,\ldots, C_{k-1}\}$
and $A_2=\{C_k,C_{k+1},\ldots, C_d\}$ for some $k$ with $1\le k\le d$.
As the path $P_1$ does not intersect itself, all cycles in $A_1$
are drawn clockwise and their clockwise order around $v$ is $C_0$, $C_1$,
\ldots, $C_{k-1}$.  Similarly, all cycles in $A_2$ are drawn anti-clockwise
and their clockwise order around $v$ is $C_d$, $C_{d-1}$, \ldots, $C_k$.

Let us now consider the cycle $K_k$.  Since the edges 
$c^{k-1}_4\tilde{c}^{k-1}_4$ and $c^{k-1}_5\tilde{c}^{k-1}_5$
do not intersect, the thick path $c^{k-1}_5vt^ks^k\tilde{c}^{k-1}_5$ is not intersected,
and $C_{k-1}$ is drawn clockwise, $K_k$ is drawn clockwise as well.
Since $C_k$ lies in the region $R_2$, the vertex $t^k$ and thus the whole thick cycle
$K_k$ lie in $R_2$.  However, that means that the edge $s^kv$
intersects either the path $P_1$ or the edge $a_1a'_1$, which is a contradiction.
We conclude that $\crn(G,T)=171$.

On the other hand, $\crn(G-vs^k,T)<171$, for $1\le k\le d$ 
(in fact, $\crn(G-vs^k,T)=170$). To see that, consider the drawing 
of $(G-vs^k,T)$ in which the cycles $C_0$, $C_1$, \ldots, $C_{k-1}$
are drawn clockwise, the cycles $C_k$, $C_{k+1}$, \ldots, $C_d$ are drawn 
anti-clockwise, and the cyclic order of the neighbors of $v$ is
$a_1c^0_5s^1t^1c^1_5\ldots s^{k-1}t^{k-1}c^{k-1}_5a'_{19} t^dc^{d-1}_5s^{d-1}t^{d-1}\ldots c^k_5t^k$.
The intersections of this drawing are of edges $a_ia_i'$ with $b_jb_j'$ for $1\le i\le 19$ and $1\le j\le 3$,
the edges $b_ib'_i$ with $b_jb'_j$ for $1\le i<j\le 3$,
and the edges $c_i^{k-1}\tilde{c}_i^{k-1}$ with all edges of $M$ for $1\le i\le 5$.
Therefore, the edge $vs^k$ is $171$-critical for each $k$, so $v$ is incident 
with $d$ critical edges.
\end{proof}

We are ready for our main result.

\begin{theorem}
\label{thm:main}
For every $k\ge 171$ and every $d$, there exists a $k$-crossing-critical graph $H$
containing a vertex of degree at least $d$.
\end{theorem}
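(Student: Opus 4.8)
The plan is to combine the three lemmas already established. By Lemma~\ref{lemma-splarge}, for any $d$ there is a special graph $(G,T)$ and a vertex $v$ such that $\crit_{171}(G,T)$ contains at least $d$ edges incident with $v$. The first step is to upgrade this from $171$ to an arbitrary $k\ge 171$: I would add a disjoint gadget of crossing number exactly $k-171$ (for instance, take $\lceil\sqrt{2(k-171)}\,\rceil$ or so mutually crossing edges, or a disjoint copy of a small crossing-critical graph of the right crossing number) and place it in a single face together with a thick cycle to prevent its interaction with the rest of the drawing, so that the new special graph $(G',T')$ satisfies $\crn(G',T')=k$ and every edge that was $171$-critical in $(G,T)$ is now $k$-critical in $(G',T')$ — the critical edges of $v$ survive because deleting one of them drops the crossing number of the $(G,T)$-part from $\ge 171$ to $<171$, hence the total from $k$ to $<k$.

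The second step is to remove the thick edges. Apply Lemma~\ref{lemma-elimthick} to $(G',T')$ (whose crossing number is finite since the drawing $\mathcal G$ exhibits a finite value, adjusted by the gadget): this yields an ordinary graph $\tilde G\supseteq G'$ with $\crn(\tilde G)=\crn(G',T')=k$ and $\crit_k(G',T')\subseteq\crit_k(\tilde G)$. In particular $\tilde G$ has crossing number exactly $k\ge k$, and at least $d$ of the edges incident with $v$ lie in $\crit_k(\tilde G)$. Note that the thick-edge replacement (parallel edges or $K_{2,t}$) does not touch the vertex $v$ or its incident non-thick edges $vs^i$, so $v$ still has the $d$ relevant edges incident to it in $\tilde G$.

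The third step is to pass to a crossing-critical subgraph. By Lemma~\ref{lemma-elimextra} applied to $\tilde G$, there is a $k$-crossing-critical subgraph $H\subseteq\tilde G$ with $\crit_k(\tilde G)\subseteq E(H)$. Since the $d$ edges of $v$ that are $k$-critical in $\tilde G$ all lie in $E(H)$, the vertex $v$ has degree at least $d$ in $H$, and $H$ is $k$-crossing-critical as required.

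The only genuinely substantive point is the first step: one must check that attaching the crossing-number-$(k-171)$ gadget does not decrease the crossing number below the sum and does not destroy the criticality of the edges $vs^i$. This is a standard "disjoint union inside a protected face" argument — the thick cycle surrounding the gadget forces, in any optimal drawing, a clean separation between the gadget's crossings and those of the $(G,T)$-part, so the crossing number is additive and deleting a $vs^i$ still forces $<171$ crossings in the $(G,T)$-part and hence $<k$ overall. Everything else is bookkeeping through the two elimination lemmas.
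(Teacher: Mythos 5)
Your proposal is correct and follows essentially the paper's route: Lemma~\ref{lemma-splarge}, then Lemma~\ref{lemma-elimthick}, then Lemma~\ref{lemma-elimextra}, plus a padding gadget of crossing number $k-171$ to lift $171$ to $k$. The only real difference is where the padding enters. The paper first extracts a $171$-crossing-critical graph $H$ with $\Delta(H)\ge d$ and then adds $k-171$ disjoint copies of $K_5$; the (unstated but immediate) verification is additivity of the crossing number over components together with planarity of $K_5$ minus an edge. You instead add the gadget at the special-graph stage and run both elimination lemmas at level $k$, letting Lemma~\ref{lemma-elimextra} deliver criticality for free, which is equally valid and arguably tidier. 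Two small caveats on your first step. First, the ``protected face with a thick cycle'' machinery is unnecessary, and as stated it does not do what you claim: a thick cycle that is a separate component forces nothing about where the gadget is drawn. What you actually need is just that for a disjoint component $W$ one has $\crn(G\cup W,T)=\crn(G,T)+\crn(W)$, which follows by restricting any drawing to each part. Second, ``$\lceil\sqrt{2(k-171)}\,\rceil$ mutually crossing edges'' is not a graph of crossing number $k-171$: a matching is planar, and even with the forcing apparatus of Lemma~\ref{lemma-splarge} one only realizes values of the form $\binom{m}{2}$. The clean choice is exactly the paper's padding, $k-171$ disjoint copies of $K_5$, whose crossing number is precisely $k-171$; note the gadget need not itself be critical, since Lemma~\ref{lemma-elimextra} takes care of that.
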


\begin{proof}
Let $(G,T)$ be the special graph constructed in Lemma~\ref{lemma-splarge}.
By Lemma~\ref{lemma-elimthick}, there exists a graph $H'\supseteq G$ such that
$\crn(H')=\crn(G,T)\ge 171$ and $\crit_{171}(G,T)\subseteq \crit_{171}(H')$.
Let $H$ be the $171$-critical subgraph of $H'$ obtained by Lemma~\ref{lemma-elimextra}.
As $\crit_{171}(G,T)\subseteq\crit_{171}(H')\subseteq E(H)$, $H$ contains at least $d$
edges incident with one vertex, hence $\Delta(H)\ge d$.
For $k > 171$ we add to $H$ $k-171$ copies of the graph $K_5$ in order to get
a $k$-crossing-critical graph.
\end{proof}

Actually, in the proof of Theorem \ref{thm:main}, we can take 
$t=\left\lfloor \tfrac{k}{171}\right\rfloor$ copies of the graph $H$ and 
$k-171t$ copies of $K_5$. This gives rise to a $k$-critical graph with
$t=\Omega(k)$ vertices of (arbitrarily) large degree. We conjecture that this
is best possible in the following sense:

\begin{conjecture}
For every positive integer $k$ there exists an integer $D=D(k)$ such that
every $k$-crossing-critical graph contains at most $k$ vertices whose degree 
is larger than $D$.
\end{conjecture}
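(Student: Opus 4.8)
The plan is to bound the number of high-degree vertices by the crossing number, using an optimal drawing as the bookkeeping device. Fix a $k$-crossing-critical graph $G$ and an optimal drawing $\mathcal{D}$, and call a vertex \emph{heavy} if its degree exceeds the threshold $D=D(k)$ to be chosen. The source of all finiteness should be the fact that a $k$-crossing-critical graph has crossing number bounded by a function of $k$ alone, say $\crn(G)\le g_1(k)$ (this is known for crossing-critical graphs, and it is exactly what the construction in Lemma~\ref{lemma-splarge} exhibits: $\crn$ stays equal to $171$ while the degree of $v$ grows without bound). Deleting one edge from each crossing of $\mathcal{D}$ realizes $G$ as a planar graph $G_0$ together with a set $F$ of at most $g_1(k)$ additional edges. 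A heavy vertex loses at most $|F|\le g_1(k)$ incident edges in passing to $G_0$, so each heavy vertex still has more than $D-g_1(k)$ incident edges lying in the planar part $G_0$; for $D$ large this is a large \emph{clean fan} in a planar skeleton.

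The core step is a charging argument that assigns to each heavy vertex a distinct witness drawn from the at most $g_1(k)$ crossings (equivalently, from the edges of $F$), which would give at most $g_1(k)$ heavy vertices as a first approximation, to be sharpened afterwards to $k$. Since every edge of a critical graph is critical, each edge of the clean fan at a heavy vertex $v$ is $k$-critical, so its deletion drops $\crn$ below $k$. I would try to show that this forces $v$ to sit at a bottleneck through which edges of $F$ must be routed, so that a large clean fan cannot be reproduced at too many vertices without exceeding the budget $\crn(G)\le g_1(k)$. The Hliněný–Salazar bound on $K_{2,N}$-subdivisions enters precisely to forbid "free" parallelism: it prevents the many fan-edges at $v$ from all reconverging to a single common partner along internally disjoint paths, and thereby forces each large fan to be paid for by genuine crossings rather than absorbed planarly.

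The central difficulty, and the reason this remains a conjecture, is that criticality is a \emph{global} property rather than a local one. As the construction makes plain, all of the critical edges $vs^i$ at the heavy vertex are uncrossed in the optimal drawing, yet each is critical because its removal unlocks a strictly cheaper global redrawing (the $170$-crossing drawing of $G-vs^k$). Hence one cannot charge a heavy vertex to a crossing lying on its own incident edges; the witnessing crossings live elsewhere in the drawing and may a priori be shared among several heavy vertices. The heart of any proof must therefore be a rerouting/exchange argument establishing that distinct heavy vertices demand \emph{disjoint} crossing budgets, which in turn seems to require a structural decomposition of crossing-critical graphs finer than bounded path-width: one that localizes \emph{where} in the path-like structure crossings are permitted, and shows that a heavy vertex can occur only inside a crossing-carrying piece.

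Finally, tightening the count from $O(g_1(k))$ down to the conjectured value $k$ is a separate hurdle. It would presumably come from charging each heavy vertex to a single unit of crossing number through the critical exchange above, matching constants exactly. The extremal construction, whose $\Theta(k)$ crossing-carrying copies of $H$ each house precisely one heavy vertex, strongly suggests that this bottleneck picture is the right one and that the bound $k$ (indeed essentially $k/171$) is tight; making the charging both injective and global is the step I expect to be genuinely hard.
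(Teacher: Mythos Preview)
The statement you are addressing is a \emph{conjecture} in the paper, not a theorem: the authors state it as an open problem immediately after Theorem~\ref{thm:main} and offer no proof. There is therefore no ``paper's own proof'' against which to compare your proposal.

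Your text is not a proof either, and to your credit you say so explicitly (``the reason this remains a conjecture''). What you have written is a heuristic outline of a possible approach together with an honest account of where it breaks down. A few remarks on the outline itself. The bound $\crn(G)\le g_1(k)$ for $k$-crossing-critical $G$ is indeed a known theorem (Richter--Thomassen), not something one reads off Lemma~\ref{lemma-splarge}; that lemma merely exhibits one family where $\crn$ happens to stay at $171$. Your observation that the critical edges $vs^i$ are uncrossed in the optimal drawing is exactly right and is the genuine obstruction to any naive local charging. But the step you flag as ``the heart of any proof''---showing that distinct heavy vertices demand disjoint crossing budgets---is not an argument, it is a restatement of the conjecture in different language. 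Nothing in your outline supplies a mechanism for producing such disjoint witnesses, and the appeal to the Hlin\v{e}n\'y--Salazar $K_{2,N}$ bound does not do this work: that bound forbids a single large $K_{2,N}$ subdivision, which says nothing about the number of vertices hosting large fans.

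In short: neither the paper nor you proves the statement, and your proposal correctly isolates the difficulty without resolving it.
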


It is not even obvious if there exist $k$-crossing-critical graphs with arbitrarily
many vertices of degree more than 6. Surprisingly, such examples have been constructed 
recently by Hlin\v{e}n\'y \cite{Hl2}. His examples may contain arbitrarily many
vertices of any even degree smaller than $2k-1$.

\end{document}